\theoremstyle{plain}
\def\Q{{\mathbb Q}}
\def\Z{{\mathbb Z}}
\def\C{{\mathbb C}}
            \def\mult{\mathrm{mult}}
             \def\Mat{\mathrm{Mat}}
\def\tr{\mathrm{tr}}
\def\End{\mathrm{End}}
\def\Hom{\mathrm{Hom}}
\def\Mat{\mathrm{Mat}}
                        \def\II{\mathrm{Id}}
\def\fchar{\mathrm{char}}
\def\GL{\mathrm{GL}}
\def\sL{\mathrm{SL}}
                              \def\sL{\mathrm{sl}}
\def\dim{\mathrm{dim}}
\newtheorem{thm}{Theorem}[section]
\newtheorem{lem}[thm]{Lemma}
\theoremstyle{definition}
\newtheorem{defn}[thm]{Definition}
\newtheorem{exs}[thm]{Examples}
           \newtheorem{rem}[thm]{Remark}
\title[Matrices of endomorphisms] {On matrices of endomorphisms of abelian varieties}
\author{Yuri  G. Zarhin}
\address{Pennsylvania State University, Department of Mathematics, University Park, PA 16802, USA}
\email{zarhin@math.psu.edu}
\thanks{The author  was partially supported by Simons Foundation Collaboration grant   \# 585711.
Part of this work was done in December 2019 - January 2020 during my stay at the Weizmann Institute of Science Department of Mathematics,
whose hospitality and support are gratefully acknowledged.}
\begin{document}\date{}

\begin{abstract}

We study endomorphisms of abelian varieties and their action on the $\ell$-adic Tate modules.
We prove that for every endomorphism one may choose a basis of each $\Q_{\ell}$-Tate module
such that the corresponding matrix has rational entries and does not depend on $\ell$.

2010 Math. Subj. Class: Primary 14K05; Secondary 16K20

Key words and phrases: {\sl Abelian varieties, Tate modules, Semisimple algebras}
\end{abstract}

\maketitle

 \section{Introduction}
 
 Let $X$ be an  abelian variety of positive dimension $g$ over an algebraically closed field $K$ of arbitrary characteristic.  We write 
 $\End(X)$ for 
 the endomorphism ring of $X$ and $\End^0(X)$ for the corresponding $\Q$-algebra
 $$\End^0(X):=\End(X)\otimes\Q,$$
 which is a finite-dimensional semisimple algebra over $\Q$. If $n$ is any integer then we write $n_X \in \End(X)$ for multiplication by $n$ in $X$,
 which is an isogeny if $n \ne 0$.
 For example, $1_X$ is the identity automorphism of $X$.

 One may view
 $\End(X)=\End(X)\otimes 1$ as an {\sl order} in $\End^0(X)$.
 Let $\ell \ne \fchar(K)$ be a prime and 
 $T_{\ell}(X)$ be the $\ell$-adic Tate module of $X$, which is a free $\Z_{\ell}$-module of rank $2g$ \cite{Mumford}. We write $V_{\ell}(X)$ for the corresponding
 $\Q_{\ell}$-vector space
 $$V_{\ell}(X)=T_{\ell}(X)\otimes_{\Z_{\ell}}\Q_{\ell}$$
 of dimension $2g$.
 
 By functoriality, there is the natural injective ring homomorphism \cite{Mumford}
 $$\End(X)\to \End_{\Z_{\ell}}(T_{\ell}(X)),  u \mapsto u_{\ell}$$
 that extends by $\Z_{\ell}$-linearity to the injective $\Z_{\ell}$-algebra homomorhism
 $$\End(X)\otimes\Z_{\ell} \hookrightarrow \End_{\Z_{\ell}}(T_{\ell}(X)),  \ u \mapsto u_{\ell}$$
 and extends by $\Q_{\ell}$-linearity to the injective $\Q_{\ell}$-algebra homomorhism
 $$\End^0(X)\otimes_{\Q}\Q_{\ell}=\End(X)\otimes\Q_{\ell} \hookrightarrow \End_{\Q_{\ell}}(V_{\ell}(X)),  \ u \mapsto u_{\ell}$$
 (see \cite{Tate,Mumford}). 
 
 If $u \in \End^0(X)$ then let us consider the  monic degree $2g$ characteristic polynomial
 $$\mathcal{P}_u(t):=\det(t \mathrm{Id}-u_{\ell}, V_{\ell}(X)) \in \Q_{\ell}[t]$$
 of  $u_{\ell}\in \End_{\Q_{\ell}}(V_{\ell}(X))$. (Here $ \mathrm{Id}: V_{\ell}(X) \to V_{\ell}(X)$ is the identity map.)
 A classical result of Weil \cite[Sect. 19,  Th. 4 on p. 180 and Definition on p. 182]{Mumford} asserts that 
 if $u \in \End(X)$ then $\mathcal{P}_u(t)$ lies in $\Z[t]$ and does {\sl not} depend on the choice of $\ell$.
 It follows readily that if $u \in \End^0(X)$ then $\mathcal{P}_u(t)$ lies in $\Q[t]$ and does {\sl not} depend on the choice of $\ell$.
 
 The aim of this note is to prove the following assertion.

\begin{thm}
\label{mat}
Let $X$ be an  abelian variety of positive dimension $g$ over an algebraically closed field $K$ of arbitrary characteristic.   Let $u\in \End^0(X)$.  Then there exists a square matrix $M(u)$ of size $2g$ with entries in $\Q$ that enjoys the following property.

If $\ell$ is any  prime  $\ne \fchar(K)$ then there is a basis of the $2g$-dimensional $\Q_{\ell}$-vector space $V_{\ell}(X)$ 
 such that the corresponding matrix of $u_{\ell}\in \End_{\Q_{\ell}}(V_{\ell}(X))$ coincides with $M(u)$.
\end{thm}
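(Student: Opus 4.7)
The plan is to reduce the theorem to finding a $\Q$-vector space $H$ of dimension $2g$ carrying an action of $A := \End^0(X)$, such that $H \otimes_\Q \Q_\ell \cong V_\ell(X)$ as $A \otimes_\Q \Q_\ell$-modules for every prime $\ell \ne \fchar(K)$. Granted such an $H$, one fixes a $\Q$-basis and takes $M(u)$ to be the matrix of $u \in A$ acting on $H$; the isomorphism transports this basis to a $\Q_\ell$-basis of $V_\ell(X)$ in which $u_\ell$ has the same rational matrix $M(u)$.

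The basic tool is rank invariance: for any $v \in A$, the $\Q_\ell$-rank of $v_\ell$ equals $2 \dim v(X)$, independent of $\ell$. Indeed, after clearing denominators so that $v \in \End(X)$, $v$ factors as $X \twoheadrightarrow v(X) \hookrightarrow X$ through its image abelian subvariety, and hence $v_\ell$ factors through $V_\ell(v(X))$, which has $\Q_\ell$-dimension $2 \dim v(X)$. Applied with $v = p(u)^k$ for each irreducible $p \in \Q[t]$ and $k \ge 0$, this gives $\ell$-independent values of $\dim_{\Q_\ell} \ker p(u_\ell)^k$, which encode the partition data of the primary decomposition of $V_\ell(X)$ as a $\Q_\ell[u]$-module relative to the factorization $\mu_u = \prod p_j^{e_j}$ over $\Q$ (the minimal polynomial $\mu_u$ being rational and $\ell$-independent, since the embedding $A \hookrightarrow \End_{\Q_\ell}(V_\ell(X))$ is injective).

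To assemble $H$, I would use Poincar\'{e} reducibility to write $X$ as isogenous to $\prod_i X_i^{n_i}$ with the $X_i$ simple and pairwise non-isogenous, so that $A = \prod_i M_{n_i}(D_i)$ with $D_i := \End^0(X_i)$ a division $\Q$-algebra and $V_\ell(X) = \bigoplus_i V_\ell(X_i)^{n_i}$ as $A \otimes_\Q \Q_\ell$-modules. For each simple $X_i$ with center $K_i := Z(D_i)$, the classical structure theory shows that $V_\ell(X_i)$ is a free module over $K_i \otimes_\Q \Q_\ell$ of rank $2 g_i / [K_i : \Q]$ and, more strongly, that its isomorphism class as $D_i \otimes_\Q \Q_\ell$-module depends only on $X_i$. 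Choose a $\Q$-vector space $H_i$ of dimension $2 g_i$ with $D_i$-action realizing this isomorphism type, and set $H := \bigoplus_i H_i^{n_i}$.

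The hard part is the rigidity of $V_\ell(X_i)$ as a $D_i \otimes_\Q \Q_\ell$-module (which translates, on the level of $\Q_\ell[u]$-modules, to the statement that the multiplicities of the $\Q_\ell$-irreducible factors of $p_j$ in the invariant-factor decomposition are independent of the factor, so the partition data pulled back from rank invariance is genuinely $\Q$-rational rather than merely $\Q_\ell$-rational). In characteristic zero this follows at once from $V_\ell(X_i) \cong H_1(X_i,\Q) \otimes_\Q \Q_\ell$ together with the fact that every module over the division algebra $D_i$ is free. In positive characteristic there is no such $H_1$, and one must instead combine the rank invariance with the classical $\ell$-adic theory of Tate and Mumford to secure the requisite rigidity; once this is in hand, the matching of multiplicities in the isomorphism $H \otimes_\Q \Q_\ell \cong V_\ell(X)$ is automatic, and the construction of $M(u)$ goes through.
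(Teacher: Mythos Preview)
Your strategy of producing a rational $A$-module $H$ with $H\otimes_{\Q}\Q_{\ell}\cong V_{\ell}(X)$ for all $\ell$ breaks down in positive characteristic. Take $X$ a supersingular elliptic curve over an algebraically closed field of characteristic $p$. Then $A=\End^0(X)$ is the rational quaternion algebra ramified exactly at $p$ and $\infty$; being a $4$-dimensional division algebra over $\Q$, every nonzero left $A$-module has $\Q$-dimension divisible by $4$. But you need $\dim_{\Q}H=2g=2$, so no such $H$ exists. Observe that for each $\ell\ne p$ one has $A\otimes_{\Q}\Q_{\ell}\cong\Mat_2(\Q_{\ell})$ and $V_{\ell}(X)$ is a single copy of the simple $\Mat_2(\Q_{\ell})$-module, so the $A\otimes_{\Q}\Q_{\ell}$-module type is as rigid and $\ell$-independent as one could ask; the obstruction to descending it to $\Q$ is the nontrivial Brauer class of $A$, and no amount of $\ell$-adic rigidity removes it. Your rank-invariance argument in the second paragraph is correct but is likewise insufficient on its own: it controls $\dim_{\Q_{\ell}}\ker p(u_{\ell})^k$ for $\Q$-irreducible $p$, which is only a \emph{sum} over the $\Q_{\ell}$-irreducible factors of $p$ and does not a priori force the individual primary pieces to be balanced.

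The paper avoids the Brauer obstruction by abandoning the full $A$-action. Its key algebraic input (Theorem~\ref{mat0}) is that any element $u$ of a finite-dimensional semisimple $\Q$-algebra lies in a subalgebra $\mathcal{C}\ni 1$ isomorphic to a direct sum $\bigoplus_j\Mat_{m_j}(E_j)$ with each $E_j$ a number field --- crucially, no nontrivial division algebras appear, so there is no Brauer class to obstruct a $\Q$-model. This is proved via the Jordan decomposition of $u$ together with Jacobson--Morozov and the representation theory of $\sL(2,k)$ to handle the nilpotent part. With $u\in\mathcal{C}$ in hand, one splits $X$ up to isogeny along the central idempotents of $\mathcal{C}$ (Theorem~\ref{splitr0}), reduces each $\Mat_{m_j}(E_j)$-block to an $m_j$-th power of an abelian variety with $E_j$-action (Theorem~\ref{matrixE}), and then invokes Ribet's freeness of $V_{\ell}$ over $E_j\otimes_{\Q}\Q_{\ell}$ exactly as in your commutative case --- applied to $\mathcal{C}$ rather than to $A$.
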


\begin{rem}
In the case of characteristic 0 the assertion of Theorem \ref{mat} reduces to the case of $K=\C$ where it is obvious; in addition, 
if $u\in \End(X)$ then one may chose bases in the free $\Z_{\ell}$-modules $T_{\ell}(X)$ in such a way that the corresponding matrices
$M(u)$ have entries in $\Z$ and do {\sl not} depend on the choice of $\ell$.
\end{rem}

\begin{exs}
\label{isotype}
\begin{itemize}
\item[(i)]
Let $E$ be a subfield of $\End^0(X)$ that contains $1_X$. Then $E$ is a number field; let $d$ be its degree over $\Q$. The natural $\Q_{\ell}$-algebra homomorphism
\begin{equation}
\label{ElEndX}
E\otimes_{\Q}\Q_{\ell} \to \End_{\Q_{\ell}}(V_{\ell}(X)), \ u \mapsto u_{\ell}
\end{equation}
 is injective \cite{Tate,Mumford} and
endows $V_{\ell}(X)$ with the structure of a faithful $E\otimes_{\Q}\Q_{\ell}$-module. It is known \cite{Ribet} that this module is {\sl free}
and its rank is $$h:=\frac{2g}{d}$$
(in particular,  $d\mid 2g$). Let us choose an $h$-element basis  
$$\{z_1, \dots, z_h\}\subset V_{\ell}(X)$$
 of the $E\otimes_{\Q}\Q_{\ell}$-module $V_{\ell}(X)$. Choose also a $d$-element basis
 \begin{equation}
 \label{BE}
 B_E=\{\alpha_1, \dots, \alpha_d\}\subset E
 \end{equation} 
 of the
$\Q$-vector space $E$.  Then the $d$-element set
\begin{equation}
\label{BEl}
B_{E,\ell}:=\{\alpha_1=\alpha_1\otimes 1, \dots, \alpha_d=\alpha_d\otimes 1\}\subset E\otimes_{\Q}\Q_{\ell}
\end{equation}
is a basis of the $d$-dimensional $\Q_{\ell}$-vector space 
$E\otimes_{\Q}\Q_{\ell}$. It follows that the $2g (=dh)$-element 
  set 
  \begin{equation}
  \label{BlX}
  B_{E,\ell,X}:=\{\alpha_{i,\ell} z_j\mid 1\le i\le d, \ 1 \le j \le h\}\subset V_{\ell}(X)
  \end{equation}
   is a basis of the $\Q_{\ell}$-vector space $V_{\ell}(X)$. 
  (Here $\alpha_{i,\ell}\in \End_{\Q_{\ell}}( V_{\ell}(X))$  is the image of 
  $$\alpha_i=\alpha_i\otimes 1\in E\otimes_{\Q}\Q_{\ell}$$
  under the injective homomorphism \eqref{ElEndX}.)
  Now assume that
 $$u \in E=E\otimes 1\subset E\otimes_{\Q}\Q_{\ell},$$
 and let $M_0(u)$ be the matrix of the $\Q$-linear map
 $$\mult_u: E \to E, \  w \mapsto uw \ \forall w \in E$$
 with respect to $B_E$ \eqref{BE}. Clearly, $M_0(u)$ coincides with the matrix of the $\Q_{\ell}$-linear map
$$ E\otimes_{\Q}\Q_{\ell}\to E\otimes_{\Q}\Q_{\ell}, \ w \mapsto uw$$
with respect to $B_{E,\ell}$ \eqref{BEl}.
It follows that  the matrix $M(u)$ of $u_{\ell}$ w.r.t. $B_{E,\ell,X}$ \eqref{BlX} is the block diagonal matrix,
all whose diagonal entries coincide with $M_0(u)$. In particular, all the entries  of  $M(u)$ are rational numbers and $M(u)$ does not depend on the choice of $\ell$.
Notice that we use the same basis $B_{E,\ell,X}$   for all $u \in E$.
\item[(ii)]
Let $m$ be a positive integer and let us consider the abelian variety $Y=X^m$. Then
$\End^0(Y)=\Mat_m(\End^0(X))$ and there is the natural embedding
$$\Mat_m(E)\subset \Mat_m(\End^0(X))=\End^0(Y).$$
We also have
$$V_{\ell}(Y)=\oplus_{i=1}^m V_{\ell}(X),  \ \End_{\Q_{\ell}}(V_{\ell}(Y))=\Mat_m(\End_{\Q_{\ell}}(V_{\ell}(X))).$$
The basis $B_{E,\ell,X}$ of $V_{\ell}(X)$ gives rise to the obvious basis $B_{E,\ell,X}^{(m)}$ of the $\Q_{\ell}$-vector space
$\oplus_{i=1}^m V_{\ell}(X)=V_{\ell}(Y)$ that enjoys the following properties. If
$$u=\left(u_{ij}\right)_{i,j=1}^m \in \Mat_m(E)\subset\Mat_m(\End^0(X))=\End^0(Y)$$
then the matrix  of $u_{\ell} \in  \End_{\Q_{\ell}}(V_{\ell}(Y))$ w.r.t. $B_{E,\ell,X}^{(m)}$ coincides with the block matrix
$\left(M(u_{ij})\right)_{i,j=1}^m$. In particular, all its entries lie in $\Q$ and  do not depend on the choice of $\ell$.
\end{itemize}
\end{exs}

The paper is organized as follows. In Section \ref{plan} we discuss the plan of the proof of Theorem \ref{mat}. 
In particular, we obtain that Theorem \ref{mat} follows from  certain auxiliary assertions about
endomorphism subalgebras of abelian varieties (Theorems \ref{splitr0} and \ref{matrixE})
and about elements of finite-dimensional semisimple algebras in characteristic 0 (Theorem \ref{mat0}).
The results about endomorphism subalgebras are proven in 
Section \ref{aux}. The assertion about semisimple algebras is proven in 
Section \ref{LinAl}.

{\bf Acknowledgements}.  I am grateful to Fei Xu for interesting stimulating questions about 
 endomorphisms of abelian varieties and useful comments to the preliminary version of the paper. My special thanks go to the referee, whose thoughtful comments helped to improve the exposition.

\section{Endomorphism subalgebras of abelian varieties: statements}
\label{plan}
In the course of the proof of Theorem \ref{mat} we will use the following assertions.

\begin{thm}
\label{splitr0}
Let $D$ be a finite-dimensional  $\Q$-algebra with identity element $1_D$ and  let $W$ be a positive-dimensional
abelian variety over $K$ endowed with a $\Q$-algebra embedding
$\tau: D \hookrightarrow \End^0(W)$
that sends $1_D$ to $1_W$. 
Suppose that $r \ge 2$ is an integer and  $D$ splits into a direct sum
$D=\oplus_{i=1}^r  D_i$
of $r$ nonzero finite-dimensional  $\Q$-algebras $D_i$.
We will identify $D_i$'s with the corresponding two-sided ideals in $D$.
Then for all $i=1, \dots, r$ there exist positive-dimensional abelian subvarieties $W_i\subset W$
and $\Q$-algebra embeddings
$\tau_i: D_i \hookrightarrow \End^0(W_i)$
that send $1_{D_i}$ to $1_{W_i}$ and enjoy the following properties.

\begin{itemize}
\item[(i)]
The homomorphism of abelian varieties
$$S: \prod_{i=1}^r W_i \to W, \ \{w_i\}_{i=1}^r  \mapsto \sum_{i=1}^r w_i \
\text{ is an isogeny}.$$
\item[(ii)]
 For each  
$ u=\sum_{i=1}^r u_i \in D\ \text{ with  } u_i \in D_i \ \forall i$
we have
$$\{\tau_i(u_i)\}_{i=1}^r \in \oplus_{i=1}^r \End^0(W_i)\subset \End^0\left(\prod_{i=1}^r W_i\right),$$
$$S \circ \left(\{\tau_i(u_i)\}_{i=1}^r\right)\circ S^{-1}=\tau(u) \in \End^0(W).$$
\end{itemize}
\end{thm}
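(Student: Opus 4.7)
My plan is to use the central idempotent decomposition of $D$ to produce, via $\tau$, a complete orthogonal system of idempotents in $\End^0(W)$, to realize their ``images'' as abelian subvarieties $W_i\subset W$, and to check that the block-diagonal algebra structure of $D$ transfers to $W$ through these subvarieties. The key technical difficulty is moving between idempotents in $\End^0(W)$ (which a priori have denominators) and honest endomorphisms and subvarieties; I will handle this uniformly by clearing denominators through a single positive integer $N$.

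First, I would let $e_i\in D_i$ denote the identity element of $D_i$, so that $\{e_1,\dots,e_r\}$ is a complete system of orthogonal central idempotents in $D$ summing to $1_D$. Pick $N\in\Z$ with $N>0$ such that $\phi_i:=N\tau(e_i)\in\End(W)$ for every $i$; the relations among the $e_i$ then yield
\[
\phi_i\phi_j=\delta_{ij}\,N\phi_i,\qquad \sum_{i=1}^r\phi_i=N\cdot 1_W,
\]
in $\End(W)$. I would then define $W_i:=\phi_i(W)\subset W$, an abelian subvariety. Injectivity of $\tau$ and $e_i\ne 0$ force $\phi_i\ne 0$, so $W_i$ is positive-dimensional; and the identity $\phi_i^2=N\phi_i$ gives $\phi_i|_{W_i}=N\cdot 1_{W_i}$, so $\tau(e_i)$ restricts to the identity $1_{W_i}$ on $W_i$.

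Next, for $u_i\in D_i$ I would enlarge $N$ (depending on $u_i$) so that also $\xi:=N\tau(u_i)\in\End(W)$. The relations $u_ie_j=\delta_{ij}u_i=e_ju_i$ yield $\xi\phi_j=0=\phi_j\xi$ for $j\ne i$ and $\phi_i\xi=N\xi$; consequently $\xi(W_j)=0$ for $j\ne i$ and $\xi(W)\subset \phi_i(W)=W_i$, so $\xi|_{W_i}\in\End(W_i)$ is well defined. Set
\[
\tau_i(u_i):=\tfrac{1}{N}\,\xi|_{W_i}\in\End^0(W_i).
\]
Linearity, multiplicativity, the normalization $\tau_i(1_{D_i})=1_{W_i}$, and injectivity of $\tau_i$ all follow formally from the analogous properties of $\tau$ together with the identity $\phi_i|_{W_i}=N\cdot 1_{W_i}$.

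Finally, for (i), surjectivity of $S$ is immediate from $\sum\phi_i=N\cdot 1_W$ together with $NW=W$. To see that $S$ is an isogeny I would compare dimensions via an $\ell$-adic Tate module: the orthogonal idempotents $\tau(e_i)_\ell$ decompose $V_\ell(W)=\bigoplus_i\tau(e_i)_\ell V_\ell(W)$, and the $i$-th summand coincides with $V_\ell(W_i)\subset V_\ell(W)$, giving $\sum_i\dim W_i=\dim W$. For (ii), applying both sides of the asserted identity to a point $(w_1,\dots,w_r)\in\prod W_i$, and using that $\tau(u_i)$ restricts to $\tau_i(u_i)$ on $W_i$ and annihilates $W_j$ for $j\ne i$, both sides collapse to $\sum_i\tau(u_i)(w_i)\in W$. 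The main obstacle throughout is really the bookkeeping between the rational idempotents $\tau(e_i)\in\End^0(W)$ and the underlying integral geometry of subvarieties and endomorphisms, which the single denominator $N$ disposes of cleanly.
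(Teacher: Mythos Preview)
Your proof is correct and follows essentially the same approach as the paper: clear denominators by a single positive integer $N$, define $W_i:=N\tau(e_i)(W)$ via the central idempotents, restrict $\tau$ to obtain $\tau_i$, and verify the block-diagonal compatibility with $S$ pointwise. The only minor deviation is that the paper proves $S$ is an isogeny by exhibiting an explicit quasi-inverse $P:W\to\prod_i W_i$, $w\mapsto\{N\tau(e_i)(w)\}_i$, with $S\circ P=N_W$ and $P\circ S=N_{\tilde W}$, whereas you instead count dimensions via the idempotent decomposition of $V_\ell(W)$; both arguments are equally valid and the rest of your proof matches the paper's.
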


\begin{thm}
\label{matrixE}
Let $E$ be a number field, $m$ a positive integer, $\Mat_m(E)$ the matrix algebra of size $m$ over $E$, and $Z$ an abelian variety of positive dimension over $K$ endowed with a $\Q$-algebra embedding
$\tilde{\kappa}: \Mat_m(E) \hookrightarrow \End^0(Z)$
that sends the identity matrix $\II_m \in  \Mat_m(E)$ to $1_Z$.

Then  there are a positive-dimensional abelian variety $X$ over $K$, a $\Q$-algebra embedding
$\kappa: E \hookrightarrow \End^0(X)$
that sends $1$ to $1_X$, 
and an isogeny of abelian varieties $\psi: X^m \to Z$  that enjoy the following properties.

Let
$\Mat_m(\kappa): \Mat_m(E) \hookrightarrow \Mat_m(\End^0(X))=\End^0(X^m)$
be the natural $\Q$-algebra embedding
that sends a matrix $\left(a_{ij}\right)_{i,j=1}^m \in  \Mat_m(E) $ to
$$\left(\kappa(a_{ij})\right)_{i,j=1}^m \in  \Mat_m(\End^0(X))=\End^0(X^m).$$
Then
\begin{equation}
\label{isog}
\tilde{\kappa}(u)=\psi\circ \Mat_m(\kappa)(u)\circ \psi^{-1} \ \forall u\in \Mat_m(E).
\end{equation}
\end{thm}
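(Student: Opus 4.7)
The plan is to exploit the matrix unit structure of $\Mat_m(E)$. Let $\{e_{ij}\}_{i,j=1}^m\subset \Mat_m(E)$ denote the standard matrix units, satisfying $e_{ij}e_{k\ell}=\delta_{jk}e_{i\ell}$ and $\sum_i e_{ii}=\II_m$; recall that $E$ embeds diagonally as the center of $\Mat_m(E)$ via $a\mapsto a\II_m$. I would use the idempotents $\pi_i:=\tilde{\kappa}(e_{ii})\in \End^0(Z)$, which are nonzero (by injectivity of $\tilde{\kappa}$), pairwise orthogonal, and sum to $1_Z$, to decompose $Z$ up to isogeny into $m$ isogenous pieces. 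Choosing a positive integer $N$ with $N\pi_i\in\End(Z)$ for every $i$, let $X_i\subset Z$ be the image of the endomorphism $N\pi_i$, a positive-dimensional abelian subvariety; the relation $\sum_i N\pi_i=N\cdot 1_Z$ then shows that the summation map $\prod_{i=1}^m X_i\to Z$ is an isogeny (a form of Poincar\'e complete reducibility).

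Next, set $X:=X_1$ with inclusion $\iota:X\hookrightarrow Z$. Since $a\II_m$ is central in $\Mat_m(E)$, the endomorphism $\tilde{\kappa}(a\II_m)$ commutes with every $\pi_i$ and hence stabilizes $X$, which defines an injective $\Q$-algebra embedding $\kappa:E\hookrightarrow \End^0(X)$ by the rule $\iota\circ\kappa(a)=\tilde{\kappa}(a\II_m)\circ\iota$, sending $1$ to $1_X$. I would then define $\psi:X^m\to Z$ (after clearing denominators by $N$ to obtain an honest morphism) by
$$\psi(x_1,\dots,x_m):=\sum_{j=1}^m \tilde{\kappa}(e_{j1})\iota(x_j),$$
and check that $z\mapsto (\tilde{\kappa}(e_{1j})z)_{j=1}^m$, valued in $X_1^m=X^m$, is an inverse of $\psi$ in the isogeny category; using $e_{1j}e_{k1}=\delta_{jk}e_{11}$ and $\sum_j e_{jj}=\II_m$, the compositions in both directions become multiplications by nonzero integers, so $\psi$ is an isogeny.

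It remains to verify the intertwining identity \eqref{isog}, which by linearity reduces to the generating set $\{a\II_m:a\in E\}\cup\{e_{ij}\}$. The case $u=a\II_m$ follows directly from the centrality of $a\II_m$ together with the defining property of $\kappa$; for $u=e_{ij}$, direct evaluation on $(x_1,\dots,x_m)\in X^m$ shows that both sides equal $\tilde{\kappa}(e_{i1})\iota(x_j)$, the right-hand side because $\Mat_m(\kappa)(e_{ij})$ places $x_j$ in the $i$-th slot with zeros elsewhere, and the left-hand side via $e_{ij}e_{k1}=\delta_{jk}e_{i1}$. The main technical care will be in passing between the isogeny category, where idempotents, subvariety decompositions, and the inverse of $\psi$ live naturally, and honest morphisms of abelian varieties, which is handled uniformly by multiplying by a sufficiently divisible positive integer; once this bookkeeping is done the remainder is elementary matrix-unit combinatorics.
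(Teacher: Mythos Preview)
Your argument is correct and follows the same overall strategy as the paper: cut out the abelian subvarieties $X_i=\tilde\kappa(Ne_{ii})(Z)$, set $X:=X_1$, define $\kappa$ via the central embedding $a\mapsto a\II_m$, and build the isogeny $\psi:X^m\to Z$ from elements of $\Mat_m(E)$ that move the first coordinate to the $j$th. The main difference is in the choice of those elements and in the surrounding machinery. The paper first proves a general idempotent-splitting result (Theorem~\ref{splitr}) to obtain the isogeny $S:\prod_i Z_i\to Z$, then invokes a separate conjugacy lemma (Lemma~\ref{conjIsog}) applied to the transposition matrices $s_{1i}$ to produce isogenies $P_{1i}:Z_1\to Z_i$, and sets $\psi=S\circ\prod_i P_{1i}$; the verification of \eqref{isog} is essentially left to the reader. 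You bypass both auxiliary results by using the off-diagonal matrix units $e_{j1}$ and $e_{1j}$ directly to write down $\psi$ and its inverse in the isogeny category, and you carry out the check of \eqref{isog} explicitly on the generating set $\{a\II_m\}\cup\{e_{ij}\}$. Your route is more self-contained and exploits the matrix-unit relations more fully; the paper's route is more modular, reusing Theorem~\ref{splitr} (needed elsewhere anyway) and isolating the ``conjugate idempotents give isogenous pieces'' step as a standalone lemma. Either way the substance is the same, and your bookkeeping remark about clearing denominators by a sufficiently divisible $N$ is exactly what is required to pass between $\End^0(Z)$ and $\End(Z)$.
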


\begin{rem}
\label{matrixEbasis}
Notice that in the notation of Theorem \ref{matrixE},  $\psi$ induces (by functoriality of Tate modules) the isomorphism 
$$\psi_{\ell}: V_{\ell}(X^m) \cong V_{\ell}(Z)$$
of $\Q_{\ell}$-vector spaces.   Thus $\psi_{\ell}\left(B_{E,\ell,X}^{(m)}\right)$ is a basis of the $\Q_{\ell}$-vector space
$ V_{\ell}(Z)$.
It follows from \eqref{isog} combined with Example \ref{isotype}(ii)
that for each $u \in  \Mat_m(E) $ there exists a square matrix $M(u)$ of size $2\dim(Z)$ with rational entries
that enjoys the following property. For each prime $\ell \ne \fchar(K)$ 
 the matrix of $\tilde{\kappa}(u)_{\ell}$ with respect to   $\psi_{\ell}\left(B_{E,\ell,X}^{(m)}\right)$ coincides with  $M(u)$.
\end{rem}

In light of  Theorem \ref{splitr0}, Theorem \ref{matrixE} and Remark \ref{matrixEbasis},
Theorem \ref{mat} is an immediate corollary of the following assertion
applied to semisimple $\End^0(X)$.

\begin{thm}
\label{mat0}
Let $D$ be a finite-dimensional semisimple algebra over $\Q$. Then every element of $D$ is contained
in a subalgebra of $D$ with the same $1$ that is isomorphic to a direct sum of matrix algebras over number fields.
\end{thm}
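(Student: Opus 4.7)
The plan is to prove Theorem~\ref{mat0} by induction on $\dim_{\Q} D$.  Using the Wedderburn decomposition $D=\bigoplus_i D_i$, one reduces immediately to the case where $D=\Mat_n(\Delta)$ is simple, with $\Delta$ a finite-dimensional division $\Q$-algebra of center $F=Z(\Delta)$.  If $\Delta=F$ then $D$ itself is a matrix algebra over a number field and we take $A=D$.  Otherwise, compute the additive Jordan--Chevalley decomposition $u=s+\nu$ in $D$, with $s$ semisimple, $\nu$ nilpotent, $[s,\nu]=0$, and $s,\nu\in\Q[u]$; this exists since $D$ is a finite-dimensional $\Q$-algebra in characteristic zero.

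Since $s$ is semisimple, the commutative $F$-subalgebra $F[s]\subset D$ is semisimple, hence a product of number fields $K'_1\times\cdots\times K'_{r'}$.  If $r'\ge 2$, the corresponding orthogonal idempotents $\epsilon_1,\ldots,\epsilon_{r'}\in F[s]$ commute with $s$ (being in $F[s]$) and with $\nu$ (as both lie in the commutative algebra $F[u]$), hence with $u$.  The Peirce diagonal $\bigoplus_i \epsilon_i D\epsilon_i$ is then a semisimple subalgebra of $D$ containing $u=\sum_i \epsilon_i u\epsilon_i$ with unit $\sum_i\epsilon_i=1_D$; each corner has strictly smaller $\Q$-dimension, so the inductive hypothesis applied to each corner combines to give the required $A$.

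Two terminal cases remain, both with $F[s]=K'$ a single number field.  If $K'=F$ then $s\in F$ is central in $D$, so $\nu=u-s$ is nilpotent in $\Mat_n(\Delta)$; the Jordan canonical form for nilpotent $\Delta$-linear endomorphisms of $\Delta^n$ produces $g\in D^*$ with $g\nu g^{-1}\in\Mat_n(\Q)\subset\Mat_n(F)$, whence $gug^{-1}=s+g\nu g^{-1}\in\Mat_n(F)$ and $A:=g^{-1}\Mat_n(F)g$ works.  If $K'\supsetneq F$, the centralizer $C:=C_D(K')$ is central simple over $K'$ with $\dim_\Q C=\dim_\Q D/[K':F]<\dim_\Q D$, and $\nu\in C$ because $\nu$ commutes with $s$ and with $F$.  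The inductive hypothesis applied to $(C,\nu)$ yields a subalgebra $A_0\subset C$ of the desired form containing $\nu$ and $1_C=1_D$; one then takes $A:=A_0\cdot K'$, the subalgebra of $C$ generated by $A_0$ and $K'$.  Since $K'=Z(C)$ commutes with $A_0$, for each simple component $\Mat_{m_j}(L_j)$ of $A_0$ the subalgebra $\Mat_{m_j}(L_j)\cdot K'\subset A$ is a homomorphic image of $\Mat_{m_j}(L_j\otimes_{L_j\cap K'}K')$; since the tensor product of two number fields over a common subfield (in characteristic zero) is a product of number fields, this image is a direct sum of matrix algebras over number fields, $A$ has the required form, and $u=s+\nu\in K'+A_0\subset A$.

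The main obstacle is the final enlargement step in the case $K'\supsetneq F$: verifying that $A_0\cdot K'$ remains a direct sum of matrix algebras over number fields, even though the centers $L_j$ of the simple components of $A_0$ need not a priori be contained in $K'\subset C$.  The crucial input is that $L_j\otimes_{L_j\cap K'}K'$ is a semisimple commutative $\Q$-algebra (a product of number fields), a consequence of working in characteristic zero, which then turns each $\Mat_{m_j}(L_j)\cdot K'$ into a direct sum of matrix algebras over number fields.
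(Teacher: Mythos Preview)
Your proof is correct and takes a genuinely different route from the paper's, particularly in the nilpotent case.

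The paper proves the statement over an arbitrary characteristic-zero field $k$ and its induction is allowed to \emph{change the ground field}: once reduced to a simple algebra with center $E$, if $E\ne k$ one re-applies the hypothesis over $E$.  After subtracting the reduced trace the element lies in the simple Lie algebra $[\mathcal{A},\mathcal{A}]$, and the terminal nilpotent case is handled by Jacobson--Morozov: embed $f$ in an $\mathfrak{sl}(2,k)$-triple and use the absolute simplicity of finite-dimensional $\mathfrak{sl}_2$-modules (plus a density lemma) to conclude that the associative subalgebra generated is a direct sum of matrix algebras over $k$.  You instead keep the ground field $\Q$ throughout and avoid Lie theory entirely: your nilpotent step---conjugating a nilpotent element of $\Mat_n(\Delta)$ into $\Mat_n(\Q)$ via the Jordan normal form over a division ring---is valid (the usual inductive construction of a Jordan basis for a nilpotent operator works over any division ring) and is more elementary than Jacobson--Morozov.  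The price is the enlargement step $A_0\mapsto A_0\cdot K'$ when $K'\supsetneq F$, which the paper's change-of-ground-field sidesteps.  That step is fine, though it can be streamlined: $A_0\cdot K'$ is a quotient of $A_0\otimes_{\Q}K'=\bigoplus_j\Mat_{m_j}(L_j\otimes_{\Q}K')$, and in characteristic zero each $L_j\otimes_{\Q}K'$ is a finite product of number fields, so any quotient of $A_0\otimes_{\Q}K'$ is again a direct sum of matrix algebras over number fields.  Invoking $L_j\cap K'$ is unnecessary and slightly delicate, since $L_j$ (the center of $e_jA_0$) need not lie inside the center $K'e_j$ of $e_jCe_j$.
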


We prove Theorem \ref{mat0} in Section \ref{LinAl}. Theorems \ref{splitr0} and \ref{matrixE} will be proven in Section \ref{aux}.

\section{Endomorphism subalgebras of abelian varieties: proofs}
\label{aux}
Results of  this section (Lemma \ref{conjIsog} and    Theorem \ref{splitr}) and their proofs are rather straightforward (and boring). However, we need them in order to prove  Theorem \ref{splitr0}.

Throughout this section, $D$ be a finite-dimensional  $\Q$-algebra with identity element $1_D$ and  let $W$ be a positive-dimensional
abelian variety over $K$ endowed with a $\Q$-algebra embedding
$$\tau: D \hookrightarrow \End^0(W)$$
that sends $1_D$ to $1_W$. 

\begin{lem}
\label{conjIsog}
Let $u_1,u_2$ be two conjugate elements of $D$, i.e., there exists $s \in D^{*}$ such that
$$u_2=s u_1 s^{-1}.$$
Let $N$ be a positive integer such that all three elements
$$N\tau(u_1)=\tau(Nu_1), N\tau(u_2)=\tau(Nu_2), N\tau(s)=\tau(Ns) \in \End^0(W)$$
actually lie in $\End(W)$.
Let us consider abelian subvarieties 
$$W_1:=\tau(Nu_1)(W) \subset W, \ W_2:=\tau(Nu_2)(W)\subset W$$
of $W$. Then  
$$\tau(Ns) (W_1)=W_2.$$
 In  addition, the restriction
 $$\tau(Ns)\bigm|_{W_1}: W_1 \to W_2$$
 is an isogeny of abelian varieties.
\end{lem}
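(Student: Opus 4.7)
The plan is to translate the algebraic conjugation relation $u_2 = s u_1 s^{-1}$ into a commutation identity between honest endomorphisms of $W$, and then read off both claims from this identity together with the fact that $\tau(Ns)$ is itself an isogeny $W \to W$.

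First, I rewrite the conjugation as $u_2 s = s u_1$ in $D$, apply the ring homomorphism $\tau$, and scale both sides by $N^2$ to obtain the key identity
$$\tau(Nu_2)\circ\tau(Ns)=\tau(Ns)\circ\tau(Nu_1)$$
inside $\End(W)$. Next, because $s\in D^{*}$ and $\tau$ is an injective $\Q$-algebra homomorphism, $\tau(s)$ is a unit in $\End^0(W)$, and hence so is $\tau(Ns)=N\cdot\tau(s)$. An element of $\End(W)$ whose image in $\End^0(W)$ is a unit is precisely an isogeny of $W$, so $\tau(Ns)\colon W\to W$ is an isogeny; in particular $\tau(Ns)(W)=W$ and $\ker\tau(Ns)$ is finite.

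Now apply $\tau(Ns)$ to the definition $W_1=\tau(Nu_1)(W)$ and use the commutation identity:
$$\tau(Ns)(W_1)=\tau(Ns)\bigl(\tau(Nu_1)(W)\bigr)=\tau(Nu_2)\bigl(\tau(Ns)(W)\bigr)=\tau(Nu_2)(W)=W_2,$$
which proves the first assertion. For the second assertion, the restriction
$$\tau(Ns)\bigm|_{W_1}\colon W_1\to W_2$$
is a surjective homomorphism of abelian varieties, and its kernel is contained in $W_1\cap\ker\tau(Ns)$, hence finite. A surjective homomorphism between abelian varieties with finite kernel is an isogeny (its source and target then automatically have the same dimension), completing the proof.

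There is no genuine obstacle here; the only point requiring care is the observation that units in $D$ are sent by $\tau$ to units in $\End^0(W)$, which in turn correspond to isogenies of $W$. Everything else is a direct consequence of the commutation identity together with the fact that isogenies are surjective with finite kernel.
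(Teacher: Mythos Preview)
Your proof is correct and follows essentially the same route as the paper's: rewrite the conjugation as $u_2 s = s u_1$, push through $\tau$ to get the commutation identity in $\End(W)$, use that $\tau(Ns)$ is an isogeny (hence surjective with finite kernel) to deduce $\tau(Ns)(W_1)=W_2$, and then observe that the restricted map is surjective with kernel contained in the finite $\ker\tau(Ns)$. The only cosmetic difference is that the paper first absorbs $N$ by renaming, and phrases finiteness of the kernel as ``killed by multiplication by some $r$''.
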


\begin{proof}
Renaming $Nu_1,Nu_2,Ns$ by  $u_1,u_2,s$ respectively, we may and will assume
that
$$\tau(u_1),\tau(u_2), \tau(s)\in \End(W), \ N=1.$$
Since $s$ in invertible in $D$, the endomorphism 
$$\tau(s): W \to W$$
is invertible in $\End^0(W)$ and therefore is an isogeny. This means that
\begin{equation}
\label{surS}
\tau(s)(W)=W,
\end{equation}
and there is a positive integer $r$ such that  multiplication by $r$ kills $\ker(\tau(s))$.
On the other hand, the equality $u_2=s u_1 s^{-1}$  means that
$u_2 s =s u_1$ and therefore
$$\tau(u_2)\tau(s)=\tau(s)\tau(u_1).$$
Combining it with \eqref{surS} and the definition of $W_1$ and $W_2$,
we obtain that
$$W_2=\tau(u_2)(W)=\tau(u_2)\tau(s)(W)=\tau(s)\tau(u_1)(W)=\tau(s)W_1,$$
i.e.,  $$W_2=\tau(s)W_1.$$ 
This means that the restriction
$$\tau(s)\bigm|_{W_1}: W_1 \to W_2,  \ w_1 \mapsto s(w_1)$$
 is a surjective morphism of abelian varieties.  Since its kernel
   $\ker(\tau(s)\bigm|_{W_1}$ is a group subscheme of $\ker(\tau(s))$, 
it is also killed by multiplication by $r$.
This implies that $\tau(s)\bigm|_{W_1}$ is an isogeny.
\end{proof}

The following assertion contains Theorem \ref{splitr0}.

\begin{thm}
\label{splitr}
Suppose that $r \ge 2$ is an integer and  $D$ splits into a direct sum
$$D=\oplus_{i=1}^r  D_i$$
of $r$ nonzero finite-dimensional  $\Q$-algebras $D_i$.
We will identify $D_i$'s with the corresponding two-sided ideals in $D$.
Let us consider the subrings
$$O_i:=\{u_i \in D_i\subset D\mid \tau(u_i)\in \End(W)\} $$
of $D_i$  ($1 \le i \le r$).

Let $e_i:=1_{D_i}$ be the identity element of $D_i$ viewed as a nonzero central idempotent in $D$.  Let $N$ be a positive integer such that
$Ne_i\in O_i$ for all $i$, i.e., all 
$\tau(Ne_i)=N\tau(e_1)$ lie in $\End(W)$.
(Such an $N$ always exists.)
Let us consider abelian subvarieties 
$W_i:=\tau(Ne_i)(W) \subset W$
of $W$. 

Then:
\begin{itemize}
\item[(i)]
\begin{enumerate}
\item[(a)]
The natural $\Q$-algebra homomorphisms
$$O_i\otimes\Q \to D_i, \ u_i\otimes c \mapsto c\cdot u_i$$
are isomorphisms for $i=1,\dots, r$.

\item[(b)] For each   
$$i,j \in \{1, \dots, r\}, \ i\ne j$$
and $w_i\in W_i, w_j\in W_j$
\begin{equation}
\label{Wij}
\tau(Ne_i)(w_i)=Nw_i, \tau(Ne_j)(w_i)=0.
\end{equation}
\end{enumerate}
\item[(ii)]
The natural morphisms of abelian varieties
\begin{equation}
\label{sumr}
S: \prod_{i=1}^r W_i \to W, \ \{w_i\}_{i=1}^r  \mapsto \sum_{i=1}^r w_i, \ P: W \to \prod_{i=1}^r W_i , \ w \mapsto 
\{\tau(Ne_i)(w_i)\}_{i=1}^r  
\end{equation}
are isogenies such that
\begin{equation}
\label{sum2r}
P\circ S=N_W, \  S \circ P=N_{\tilde{W}} \ \text{ where } \tilde{W}:=\prod_{i=1}^r W_i.
\end{equation}
In particular, $\sum_{i=1}^r W_i=\{\sum_{i=1}^r w_i\mid w_i\in W_i \ \forall \ i\}$ coincides with $W$.
\item[(iii)]
For each $u_i\in O_i$ and $u_j \in O_j$ with $i\ne j$
\begin{equation}
\label{uijWij}
\tau(u_i)(W_i)\subset W_i, \tau(u_j)(W_i)=\{0\}, \ \tau(u_i)(W_j)=\{0\}, \tau(u_j)(W_j)\subset W_j.
\end{equation}
\item[(iv)]
There   exist $\Q$-algebra embeddings 
$$\tau_i: D_i \to \End^0(W_i) $$
that  enjoy the following properties.
\begin{enumerate}
\item[(c)]
 $\tau_i(e_i)=1_{W_i}$.
\item[(d)] If $u_i \in O_i \subset D_i$ then $\tau_i(u_i) \in \End(W_i)$ and 
$$\tau_i(u_i)(w_i)=\tau(u_i)(w_i) \in W_i  \ \forall w_i \in W_i.$$
\item[(e)] For each  
$$ u=\sum_{i=1}^r u_i \in D\ \text{ with  } u_i \in D_i \ \forall i$$
we have
$$\{\tau_i(u_i)\}_{i=1}^r \in \oplus_{i=1}^r \End^0(W_i)\subset \End^0\left(\prod_{i=1}^r W_i\right)=\End^{0}(\tilde{W})$$
and
$$S \circ \left(\{\tau_i(u_i)\}_{i=1}^r\right) \circ S^{-1}=\tau(u) \in \End^0(W).$$
\end{enumerate}
\end{itemize}
\end{thm}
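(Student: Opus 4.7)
The plan is to use the central idempotents $e_i = 1_{D_i}$ of the decomposition $D = \oplus D_i$ to cut $W$ geometrically into the pieces $W_i := \tau(Ne_i)(W)$, and then to exploit the idempotent identities $e_i e_j = \delta_{ij} e_i$ and $\sum_i e_i = 1_D$ to produce both the isogeny decomposition $W \sim \prod W_i$ and the compatibility between $\tau$ and the $\tau_i$.

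An integer $N$ with $N\tau(e_i) \in \End(W)$ for every $i$ exists because $\End(W)$ is a full $\Z$-order in $\End^0(W)$. Each $W_i = \tau(Ne_i)(W)$ is a nonzero, hence positive-dimensional, abelian subvariety, since $e_i \ne 0$ and $\tau$ is injective. Part~(i)(a) follows by clearing denominators: for $u_i \in D_i$, choose $k$ with $k\tau(u_i) \in \End(W)$; then $k u_i \in O_i$, so $\Q\cdot O_i = D_i$, and the natural map $O_i\otimes\Q \to D_i$ is an isomorphism since $O_i \subset D_i$ is torsion-free. For (i)(b), if $w_i = \tau(Ne_i)(v)$, then
$$\tau(Ne_i)(w_i) = \tau(N^2 e_i^2)(v) = N\tau(Ne_i)(v) = Nw_i,\qquad \tau(Ne_j)(w_i) = \tau(N^2 e_j e_i)(v) = 0.$$

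For part~(ii), the identity $\sum_i e_i = 1_D$ gives $S\circ P(w) = \tau(N\cdot 1_D)(w) = Nw$, while (i)(b) gives $P\circ S(w_1,\dots,w_r) = (Nw_i)_i$. Thus $S\circ P = N_W$ and $P\circ S = N_{\tilde W}$, which forces $\dim \tilde W = \dim W$ and finite kernels for both $S$ and $P$, so both are isogenies. Part~(iii) reduces to the algebraic identities $u_i = e_i u_i$ and $u_j e_i = 0$: for $w_i = \tau(Ne_i)(v)$,
$$\tau(u_i)(w_i) = \tau(Ne_i u_i)(v) = \tau(Ne_i)\bigl(\tau(u_i)(v)\bigr) \in W_i,\qquad \tau(u_j)(w_i) = \tau(Nu_j e_i)(v) = 0.$$

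For (iv), define $\tau_i(u_i) := \tau(u_i)|_{W_i}$ for $u_i \in O_i$, which lands in $\End(W_i)$ by (iii), and extend $\Q$-linearly using~(i)(a). Then $\tau_i(Ne_i) = N_{W_i}$ by (i)(b), so $\tau_i(e_i) = 1_{W_i}$; and $\tau_i$ is injective because if $\tau(u_i)$ vanishes on $W_i$ then by (iii) it vanishes on every $W_j$, hence on $W = \sum_j W_j$, forcing $u_i = 0$. The identity in~(e) follows from computing, for $\tilde w = (w_1,\dots,w_r)\in \tilde W$,
$$\tau(u)\bigl(S(\tilde w)\bigr) = \sum_{i,j}\tau(u_j)(w_i) = \sum_i \tau(u_i)(w_i) = S\bigl(\{\tau_i(u_i)(w_i)\}_i\bigr),$$
where all cross terms vanish by~(iii). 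The one genuinely nontrivial point is (ii): one needs \emph{both} compositional identities $S\circ P = N_W$ and $P\circ S = N_{\tilde W}$ to conclude that $S$ and $P$ are individually isogenies; everything else is a direct consequence of the central-idempotent algebra of $D$ transported through $\tau$.
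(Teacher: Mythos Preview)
Your proof is correct and follows essentially the same route as the paper's: both use the idempotent relations $e_i^2=e_i$, $e_ie_j=0$, $\sum e_i=1_D$ to obtain $S\circ P=N_W$ and $P\circ S=N_{\tilde W}$, define $\tau_i$ by restriction on $O_i$ and extend by $\Q$-linearity, and prove injectivity of $\tau_i$ via $W=\sum_j W_j$. The only step you leave implicit that the paper spells out is that in~(e) one first clears denominators (replaces $u$ by $mu$) so that each $u_i\in O_i$ and the pointwise computation $\tau(u)(S(\tilde w))=\sum_{i,j}\tau(u_j)(w_i)$ makes sense in $\End(W)$ rather than $\End^0(W)$.
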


\begin{rem}
\begin{itemize}
\item[(i)]
The rings $O_i$ do not have to have the identity elements.
\item[(ii)]
The abelian subvarieties $W_i$ do not depend on the choice of $N$.
\end{itemize}
\end{rem}

\begin{proof}[Proof of Theorem \ref{splitr}]
(i). Since $\End(W)$ is a free $\Z$-module of finite rank and $\tau$ is a ring embedding,
$O_i$ is a also a free $\Z$-module of finite rank that generates the finite-dimensional $\Q$-vector space $D_i$.
This implies that $O_i\otimes\Q \to D_i$ is an isomorphism, which proves (i).

Clearly, 
$$\sum_{i=1}^r e_i=1_D, \  e_i^2=e_i \ \forall i,  \  e_i e_j=0 \ \forall i \ne j.$$
This implies that
\begin{equation}
\label{e1e2}
\tau(Ne_i)^2=N \cdot \tau(Ne_i),  \  \sum_{i=1}^r \tau(Ne_i)=N\cdot 1_W=N_W,
\end{equation}
$$ \tau(Ne_i) \tau(Ne_j) =0_W \ \forall i \ne j.$$

 Let 
 $$i,j \in \{1, \dots, r\}, \ w_i\in W_i, w_j \in W_j.$$
   Then there  exist
$$\tilde{w}_i,\tilde{w}_j \in W \text{ such that } w_i=\tau(Ne_i)(\tilde{w}_i) , w_j=\tau(Ne_j)(\tilde{w}_j).$$
This implies that
$$\tau(Ne_i) (w_i)=\tau(Ne_i)^2(\tilde{w}_i)=N\tau(Ne_i)(\tilde{w}_i)=N w_i.$$
On the other hand, if $i\ne j$  then
$$\tau(Ne_i) (w_j)=\tau(Ne_i)\tau(Ne_j)(\tilde{w}_j)=0\in W$$
in light of \eqref{e1e2}.

(ii) Recall that for each $w \in W$
$$\tau(Ne_i)(w) \in W_i \ \forall i \in \{1, \dots, r\}.$$
This implies that
$$ \ S\circ P(w)=S (\tau(Ne_1)(w), \tau(Ne_2)(w), \dots , \tau(Ne_r)(w))=\sum_{i=1}^r \tau(Ne_i)(w)=$$
$$\tau(N\cdot 1_D)(w)=\tau(N(\sum_{i=1}^r e_i))w=Nw.$$
This proves that 
\begin{equation}
\label{SPi}
 S\circ P=N_W.
 \end{equation}
On the other hand, let $w_i\in W_i  \ \forall i \in \{1, \dots, r\}$. 
Thanks to \eqref{e1e2},
$$P\circ S (w_1,w_2, \dots, w_r)=P\left(\sum_{i=1}^r w_i\right)=$$
$$ \quad  \quad  \quad \left(\sum_{i=1}^r\tau(Ne_1)(w_i), \sum_{i=1}^r\tau(Ne_2)(w_i), \dots, \sum_{i=1}^r \tau(Ne_r)(w_i)\right)=$$
$$(\tau(Nu_1)(w_1),\tau(Nu_2)(w_2), \dots, \tau(Nu_r)(w_r))   =$$ $$ (Nw_1,Nw_2, \dots, Nw_r)=N\cdot (w_1, w_2, \dots, w_r).$$
This proves that
\begin{equation}
\label{PiS}
P\circ S =N_{\tilde{W}}.
\end{equation}
Combining \eqref{SPi} and \eqref{PiS}, we obtain that both $S$ and $P$ are isogenies. This proves (ii).

Proof of (iii). 
Let $u_i \in O_i\subset D_i$. Then 
$$u_i=e_i u_i= e_i u_i e_i.$$
This implies that
$$\tau(Nu_i)=N \tau(u_i)=\tau(Ne_i)\tau(u_i) \in \End(W)$$
and therefore
$$\tau(Nu_i)(W)=\tau(Ne_i)\tau(u_i)(W)\subset \tau(Ne_i)(W)=W_i,$$
i.e.,
$$N\tau(u_i)(W)\subset W_i.$$
Clearly, $\tau(u_i)(W)$ is an abelian subvariety of $W$ and therefore coincides
with $N \tau(u_i)(W)$. This implies that
$$\tau(u_i)(W)=N \tau(u_i)(W)\subset W_i.$$
In particular, 
$$\tau(u_i)(W)\subset W_i \ \forall i.$$
On the other hand, if $i \ne j$  then
$$0= u_i e_j,  0_W=\tau(Nu_i e_j)=\tau(u_i)\tau(Ne_j)$$
and therefore
$$\tau(Nu_i)(W_j)=\{0\}.$$
Again, $\tau(u_i)(W_j)$ is an abelian subvariety of $W$ and therefore coincides
with 
$$N\tau(u_i)(W_j)= \tau(Nu_i)(W_j)=\{0\}.$$
Hence,
$\tau(u_i)(W_j)=\{0\}$.
This ends the proof of  \eqref{uijWij}.

(iv) Now \eqref{uijWij} allows us to define the ring homomorphisms
$$\tau_i: O_i \to \End(W_i),  \ u_i \mapsto \{w_i \mapsto \tau(u_i)(w_i)\in W_i \ \forall w_i \in W_i\} \ \forall u_i\in O_i .$$
It follows from already proven (i) that $\tau_i$ extend to $\Q$-algebra homomorphisms
$$D_i=O_i\otimes \Q \to \End(W_i)\otimes\Q=\End^0(W_i),$$
which we continue to denote by $\tau_i$. 
It follows from already proven (i) that $\tau_i(Ne_i)=N_{W_i}$ and therefore
$$\tau_i(e_i)=\frac{1}{N}\tau_i(Ne_i)=\frac{1}{N}N_{W_i}=1_{W_i}.$$
This proves (c). The property (d) follows from the very definition of $\tau_i$.
Let us prove that 
$$\tau_i: D_i \to \End^0(W_i)$$
is an {\sl embedding}. Suppose that $u_i \in D_i$ satisfies $\tau_i(u_i)=0$.
Replacing $u_i$ by $m u_i$ for sufficiently divisible positive integer $m$, we may and will
assume that $u_i \in O_i$. Then 
$$\{0\}=\tau_i(u_i)(W_i)=\tau(u_i)(W_i).$$  On the other hand,
if $i \ne j$  then $\tau(u_i)(W_j)=0$. It follows that
$\tau(u_i)$ kills 
$$W_i+\sum_{j\ne i}W_j=\sum_{j=1}^r W_j=W,$$ because the {\sl isogeny} $S$ is {\sl surjective}. This implies that $\tau(u_i)=0_W$. Since $\tau$ is injective,
$u_i=0$, i.e., $\tau_i$ is an embedding.

Let us prove (e).  Replacing $$u=(u_1, \dots, u_i, \dots, u_r)=\sum_{i=1}^r u_i\in \oplus_{i=1}^r D_i=D$$ by $mu=(mu_1,\dots mu_i, \dots, mu_r)$ for sufficiently divisible positive integer $m$,
we may and will assume that all $u_i\in O_i$. Let us check that in $\Hom(\prod_{i=1}^r W_i,W)=\Hom(\tilde{W},W)$ we have
\begin{equation}
\label{Stau}
S \circ (\tau_1(u_1),\dots \tau_i(u_i), \dots \tau_r(u_r)) =\tau(u)\circ S.
\end{equation}
So, let $(w_1,\dots w_i, \dots, w_r) \in \prod_{i=1}^r W_i$. Then
$$(\tau_1(u_1),\dots \tau_i(u_i), \dots, \tau_r(u_r))(w_1,\dots w_i, \dots, w_r)=\left(\tau_1(u_1)(w_1),\dots \tau_i(u_i)(w_i), \dots, \tau_r(u_r)(w_r)\right)\in
\prod_{i=1}^r W_i,$$
$$S\circ \left((\tau_1(u_1),\dots, \tau_i(u_i),  \dots, \tau_r(u_r))(w_1,\dots w_i, \dots w_r)\right)=S \left(\tau(u_1)(w_1),  \dots, \tau(u_i)(w_i), \dots, \tau(u_r)(w_r) \right)=$$
$$\sum_{i=1}^r\tau(u_i)(w_i).$$
On the other hand,
$S(w_1,\dots, w_i, \dots, w_r)=\sum_{i=1}^r w_i$; in addition,  thanks to   (iii),
 $$\tau(u_i)(w_j)=0 \ \forall i \ne j.$$
This implies that
$$ \tau(u)\circ S (w_1,\dots ,w_i, \dots,  w_r)=\tau(u)\left(\sum_{i=1}^r w_i\right)=\left(\sum_{i=1}^r\tau(u_i)\right)\left(\sum_{j=1}^r w_j\right)=$$
$$\sum_{i=1}^r \tau(u_i)(w_i)=
S\circ \left(\tau_1(u_1)(w_1),\dots, \tau_i(u_i)(w_i), \dots, \tau_r(u_r)(w_r)\right).$$
This proves \eqref{Stau}. Multiplying both sides of \eqref{Stau} by $ S^{-1}$ from the right, we get  the desired equality
$$S \circ (\tau_1(u_1),\dots \tau_i(u_i), \dots, \tau_r(u_r))\circ S^{-1}=\tau(u) $$
in $\End^0(W)$.
\end{proof}

\begin{proof}[Proof of Theorem \ref{matrixE}]
We write $e_{ij}\in \Mat_m(E)$ for the matrix, whose only nonzero entry is $1$ at the intersection of $i$th row and $j$th column,
 $i,j \in \{1, \dots , m\}$. We have
$$e_{ii}^2=e_{ii},  \sum_{i=1}^m e_{ii}=\II_m, \ e_{ii}e_{jj}=0 \ \forall i \ne j.$$
In addition, if $i \ne j$ then the monomial matrix 
$$s_{ij}=s_{ji}\in \GL(m,\Q) \subset \Mat_m(E)$$ attached to the {\sl transposition} $(ij)$ satisfies
$$s_{ij}=\II_m -(e_{ii}+e_{jj})+(e_{ij}+e_{ji}) \in \GL(m,\Q), \ s_{ij}^2=\II_m,$$
\begin{equation}
\label{Z1i}
   s_{ij} e_{ii} s_{ij}^{-1}=e_{jj}.
  \end{equation}
There is a positive integer $N$ such that
$$N\tilde{\kappa}(e_{ij}) \in \End(Z) \ \forall i,j=1, \dots m.$$
Let us consider the nonzero abelian subvarieties
$$Z_i:=\tilde{\kappa}(N e_{ii})(Z) \subset Z.$$
It follows from \eqref{Z1i} combined with Lemma \ref{conjIsog}
that there  are isogenies of abelian varieties
$$P_{ij}:=\tilde{\kappa}(N s_{ij})\bigm|_{Z_i}: Z_i \to Z_j, \  z_i \mapsto \tilde{\kappa}(N s_{ij})(z_i),$$
Since $$s_{ij}^2=\II_m, \  \tilde{\kappa}(s_{ij})^2=1_Z,$$ 
we get
$$P_{ji} \circ P_{ij}=N^2_{Z_i}.$$
They give rise to the product-isogenies
$$\mathbf{P}:=\prod_{i=1}^m P_{1i}: Z_1^m \to \prod_{i=1}^m Z_i, \ \mu:=\prod_{i=1}^m P_{i1}:  \prod_{i=1}^m Z_i \to Z_1^m$$
such that
$$\mu \circ \mathbf{P}=N^2_{Z_1^m}, \ \mathbf{P} \circ \mu =N^2_{\tilde{Z}} \ \text{ where } \tilde{Z}:=\prod_{i=1}^m Z_i.$$
Applying Theorem \ref{splitr} to the subalgebra of diagonal matrices
$$D=\oplus_{i=1}^m E\cdot e_{ii}\subset \Mat_m(E),$$
$$r=m, D_i=E\cdot e_{ii}, \  e_i:=e_{ii}$$
and 
$$W:=Z,  \ \tau: D\hookrightarrow \End^0(Z), \ u \mapsto \tilde{\kappa}(u),$$
we obtain that the morphism of abelian varieties
\begin{equation}
\label{sumZ}
S: \tilde{Z}=\prod_{i=1}^m Z_i \to Z, \  \{z_i\}_{i=1}^m  \mapsto \sum_{i=1}^r z_i 
\end{equation}
is an  isogeny.
In addition, we get the $\Q$-algebra embeddings
$$\tau_i:   E\cdot e_{ii}=D_i \hookrightarrow \End^0(Z_i)$$
that send $e_{ii}$ to $1_{Z_i}$ and such that  for every collection $u_i \in D_i$ ($1\le i\le m$)
$$\sum_{i=1}^m \tau_i(u_i) \in \oplus_{i=1}^m \End^0(Z_i)\subset \End^0\left(\prod_{i=1}^m Z_i\right)=\End^{0}(\tilde{Z}),$$
$$S\circ \left(\sum_{i=1}^m \tau_i(u_i)\right)\circ S^{-1}=\tilde{\kappa}\left(\sum_{i=1}^m u_i\right) \in \End^0(Z).$$
This implies that
$$S^{-1}\circ \tilde{\kappa}\left(\sum_{i=1}^m u_i\right)\circ S=\sum_{i=1}^m \tau_i(u_i) \in \oplus_{i=1}^m \End^0(Z_i)\subset \End^0\left(\prod_{i=1}^m Z_i\right).$$
In particular, for each $j\in \{1, \dots,m\}$
$$S^{-1}\circ \tilde{\kappa}(e_{jj})\circ S =\tau_j(e_{jj})=1_{Z_j}\in \End^0(Z_j)\subset \oplus_{i=1}^m \End^0(Z_i) \subset \End^0\left(\prod_{i=1}^m Z_i\right).$$

Let us put
$$X:=Z_1, \psi:=  \mathbf{P} \circ S: X^m=Z_1^m \to \prod_{i=1}^m Z_i \to Z.$$
In order to define $\kappa$, let us consider
$$O:=\{u \in E\mid \tilde{\kappa}(u)\in \End(Z)\}.$$
Clearly, $O$ is an order in $E$ and the natural $\Q$-algebra homomorphism
$$O\otimes\Q \to E, \ u\otimes a \mapsto au$$
is an isomorphism.  In addition, for each nonzero $u \in O$ the selfmap of $Z$
$$\tilde{\kappa}(u): Z \to Z$$
is an isogeny.
Since $E$ is the center of $\Mat_m(E)$, 
every $u\in O$ commutes with all $e_{ij}$ and $s_{ij}$. In particular, for all $u\in O$
$$\tilde{\kappa}(u)(Z_i)=\tilde{\kappa}(u)\tilde{\kappa}(Ne_{ii})(Z)=\tilde{\kappa}(uNe_{ii})(Z)=\tilde{\kappa}(Ne_{ii}u)(Z)\subset \tilde{\kappa}(Ne_{ii})(Z)=Z_i$$
and the diagrams
\begin{equation}
\begin{CD}
Z_1 @>{\tau_1(u)}>>Z_1\\
@V P_{1i} VV @V P_{1i} VV \\
Z_i @>{\tau_i(u)}>> Z_i
\end{CD},\ \quad\quad
\begin{CD}
Z_i @>{\tau_i(u)}>>Z_i\\
@V P_{i1} VV @V P_{i1} VV \\
Z_1 @>{\tau_1(u)}>> Z_1
\end{CD}
\end{equation}
are commutative. This gives rise to the ring homomorphisms
$$\tau_i: O \to \End(Z_i),  \ u \mapsto \{z_i \mapsto \tilde{\kappa}(u)(z_i) \ \forall z_i \in Z_i\} \ \forall u \in O,$$
which are obviously injective, and extend them by $\Q$-linearity to the injective $\Q$-algebra homomorphisms
$E \to \End^{0}(Z_i)$, which we continue to denote by $\tau_i$. Let us put
$$\kappa:=\tau_1: E \hookrightarrow \End^0(X=Z_1).$$

\end{proof}

\section{Linear Algebra}
\label{LinAl}

\begin{defn}
Let $k$ be a field and $\mathcal{C}$ be a finite-dimensional $k$-algebra with identity element $1_{\mathcal{C}}$.
We say that  $\mathcal{C}$ is splittable over $k$ if it  is isomorphic to a direct sum of matrix algebras over fields that are finite algebraic separable extension of $k$.
\end{defn}

\begin{exs}
\label{EXS}
\begin{itemize}
\item[(i)]
If $\mathcal{C}$  is a field that is  a finite algebraic separable extension of $k$ then it is splittable over $k$.
\item[(ii)]
If $\mathcal{C}_1$ and $\mathcal{C}_2$ are splittable over $k$ then their direct sum  $\mathcal{C}_1 \oplus\mathcal{C}_2$ is also splittable over $k$.
\item[(iii)]
If $\mathcal{C}$ is  splittable over $k$ then for all positive integers $d$ the matrix algebra $\Mat_d(\mathcal{C})$ is also  splittable over $k$.
\item[(iv)]
Let $L/k$ be a finite algebraic separable field extension of $k$. If $\mathcal{C}$ is  splittable over $k$ then
$$\mathcal{C}_L=\mathcal{C}\otimes_k L$$
is splittable over $k$ and over $L$.
\item[(v)] Suppose that $E$ is an overfield of $k$ and $\mathcal{C}$ is   a finite-dimensional splittable $E$-algebra. 
If $E/k$ is a finite separable field extension then $\mathcal{C}$ is  
 splittable over $k$ as well.
\end{itemize}
\end{exs}

Clearly, Theorem \ref{mat0} is a special case of the following assertion.

\begin{thm}
\label{elementsplit}
Let $k$ be a field of characteristic zero, $\mathcal{A}$ a nonzero semisimple finite-dimensional $k$-algebra with identity element $1_{\mathcal{A}}$, and $f$ an element  of $\mathcal{A}$. Then there exists a  $k$-subalgebra $\mathcal{C}$ of $\mathcal{A}$ that contains $1_{\mathcal{A}}$ and $f$, and is splittable over $k$.
\end{thm}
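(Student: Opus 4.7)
The plan is to use the Jordan--Chevalley decomposition available in characteristic $0$, pass to the centralizer of the semisimple part, and thereby reduce to the following concrete task: in a simple matrix algebra $\Mat_m(D)$ over a division ring $D$, any element of the form (central scalar) $+$ (nilpotent) lies in a splittable subalgebra.

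First, since $\fchar k = 0$, every element of a finite-dimensional $k$-algebra admits an additive Jordan decomposition; applied to $f$ this yields $f = f_s + f_n$ with $f_s, f_n \in k[f]\subseteq \mathcal{A}$, where $k[f_s]$ is a semisimple $k$-algebra, $f_n$ is nilpotent, and $f_s f_n = f_n f_s$. Set $\mathcal{B} := Z_\mathcal{A}(f_s)$. The centralizer in a semisimple algebra of a semisimple subalgebra with the same identity is again semisimple, so $\mathcal{B}$ is a semisimple subalgebra of $\mathcal{A}$; it clearly contains $1$, $f_s$, and (as $f_n \in k[f]$ commutes with $f_s$) also $f_n$ and $f$, with $f_s$ central by construction. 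Apply Wedderburn to $\mathcal{B}$: write $\mathcal{B} = \bigoplus_j \mathcal{B}_j$ with $\mathcal{B}_j \cong \Mat_{m_j}(D_j)$ simple and $K_j := Z(\mathcal{B}_j)$ a finite separable extension of $k$. Since $f_s \in Z(\mathcal{B}) = \bigoplus_j K_j$, its $j$-th component is a scalar $\beta_j \in K_j$, while the $j$-th component $\nu_j$ of $f_n$ is nilpotent in $\mathcal{B}_j$; hence $f \cdot 1_{\mathcal{B}_j} = \beta_j \cdot 1_{\mathcal{B}_j} + \nu_j$.

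Inside each $\mathcal{B}_j = \Mat_{m_j}(D_j)$, view $\nu_j$ as a nilpotent right-$D_j$-linear endomorphism of $D_j^{m_j}$. The classical Jordan-form argument carries over from a field to the division ring $D_j$: decomposing $D_j^{m_j}$ into $\nu_j$-cyclic right $D_j$-submodules produces $g_j \in \mathcal{B}_j^\times$ such that $g_j^{-1}\nu_j g_j$ is a block-diagonal matrix consisting of standard nilpotent Jordan blocks whose nonzero entries are $1 \in K_j \subseteq D_j$. In particular $\nu_j \in g_j \Mat_{m_j}(K_j) g_j^{-1}$, and since the central scalar $\beta_j \cdot 1_{\mathcal{B}_j}$ lies in every conjugate of $\Mat_{m_j}(K_j)$, the subalgebra $\mathcal{C}_j := g_j \Mat_{m_j}(K_j) g_j^{-1} \subseteq \mathcal{B}_j$ contains $1_{\mathcal{B}_j}$ and $f \cdot 1_{\mathcal{B}_j}$ and is isomorphic as a $k$-algebra to $\Mat_{m_j}(K_j)$, which is splittable over $k$ by Examples~\ref{EXS}(i) and (iii). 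Setting $\mathcal{C} := \bigoplus_j \mathcal{C}_j \subseteq \mathcal{B} \subseteq \mathcal{A}$ and invoking Example~\ref{EXS}(ii) delivers the required splittable subalgebra of $\mathcal{A}$ containing $1_\mathcal{A} = 1_\mathcal{B}$ and $f$.

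The main technical input is the Jordan canonical form for nilpotent elements of $\Mat_m(D)$ over a noncommutative division ring $D$; this is not formal from the field case but follows from the structure theorem for finitely generated torsion modules over the left-and-right principal ideal domain $D[x]$, essentially mimicking the classical proof. The characteristic-$0$ hypothesis is used only to supply the Jordan--Chevalley decomposition of $f$ inside $k[f]$ and to guarantee the separability of each extension $K_j/k$.
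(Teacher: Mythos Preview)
Your proof is correct and takes a genuinely different route from the paper's. Both arguments begin with the Jordan--Chevalley decomposition $f=f_s+f_n$ and exploit the fact that the centralizer of the semisimple part is again semisimple, but they diverge at the nilpotent step. The paper proceeds by induction on $\dim_k\mathcal{A}$: it first reduces to $\mathcal{A}$ central simple over $k$; when $f_s\ne 0$ it applies the induction hypothesis to the strictly smaller centralizer $\mathcal{Z}_{f_s}$; and when $f$ is purely nilpotent it invokes the Jacobson--Morozov theorem to embed $f$ in an $\mathfrak{sl}(2,k)$-triple, then uses the representation theory of $\mathfrak{sl}(2,k)$ together with the density Lemma~\ref{dense} to show that the associative envelope of this triple is a direct sum of matrix algebras over $k$. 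You avoid both the induction and the Lie-theoretic machinery: you treat $f_s$ and $f_n$ simultaneously inside each Wedderburn factor $\Mat_{m_j}(D_j)$ of the centralizer, conjugating the nilpotent component into $\Mat_{m_j}(K_j)$ via the Jordan normal form over the division ring $D_j$. Your approach is more elementary and more explicit---it produces a concrete $\mathcal{C}\cong\bigoplus_j\Mat_{m_j}(K_j)$ directly---at the price of appealing to linear algebra over a noncommutative $D_j$ (which, as you note, goes through for nilpotent operators exactly as over a field). The paper's approach trades that for standard Lie theory and yields a subalgebra with an $\mathfrak{sl}_2$-origin, a structurally pleasant fact that is, however, not needed for the application to Theorem~\ref{mat}.
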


We will need the following  lemma that will be proven at the end
of this section.

\begin{lem}
\label{dense}
Let $\mathcal{A}$  be a nonzero finite-dimensional algebra over a field $k$ with identity element $1_{\mathcal{A}}$.
Let $\mathcal{V}$ be a nonzero finite-dimensional $k$-vector space that is a faithful semisimple $\mathcal{A}$-module.  Assume additionally
that every simple $\mathcal{A}$-submodule $M$ of $\mathcal{V}$ is absolutely simple, i.e., the centralizer
$\End_{\mathcal{A}}(M)$ of $\mathcal{A}$ in $\End_k(M)$ coincides with $k\cdot 1_M$ where $1_M: M \to M$
is the identity map.

Then $\mathcal{A}$ is isomorphic to a direct sum of matrix algebras over $k$.

\end{lem}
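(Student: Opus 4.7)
The plan is to apply the Jacobson density theorem (equivalently, the finite-dimensional double centralizer theorem) to the faithful semisimple module $\mathcal{V}$. First I would decompose $\mathcal{V}$ into isotypic components $\mathcal{V}=\oplus_{i=1}^{r}\mathcal{V}_{i}$, where each $\mathcal{V}_{i}\cong M_{i}^{n_{i}}$ for pairwise non-isomorphic simple $\mathcal{A}$-submodules $M_{i}\subset\mathcal{V}$, and set $d_{i}:=\dim_{k}M_{i}$. Since every simple submodule of $\mathcal{V}$ is isomorphic to some $M_{i}$, the absolute simplicity hypothesis gives $\End_{\mathcal{A}}(M_{i})=k\cdot 1_{M_{i}}$ for each $i$.

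Next I would compute the commutant $D:=\End_{\mathcal{A}}(\mathcal{V})$. Schur's lemma applied to non-isomorphic simples yields $\Hom_{\mathcal{A}}(\mathcal{V}_{i},\mathcal{V}_{j})=0$ for $i\neq j$, so $D=\prod_{i=1}^{r}\End_{\mathcal{A}}(\mathcal{V}_{i})$, and $\End_{\mathcal{A}}(M_{i}^{n_{i}})\cong\Mat_{n_{i}}(\End_{\mathcal{A}}(M_{i}))=\Mat_{n_{i}}(k)$. Hence $D\cong\prod_{i=1}^{r}\Mat_{n_{i}}(k)$. The Jacobson density theorem asserts that the image of $\mathcal{A}$ in $\End_{k}(\mathcal{V})$ is dense in $\End_{D}(\mathcal{V})$, and density forces equality in the finite-dimensional setting. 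I would compute $\End_{D}(\mathcal{V})$ factor by factor: viewing $\mathcal{V}_{i}\cong M_{i}\otimes_{k}k^{n_{i}}$ with $\Mat_{n_{i}}(k)$ acting on the second tensor factor, one sees $\End_{\Mat_{n_{i}}(k)}(\mathcal{V}_{i})=\End_{k}(M_{i})\cong\Mat_{d_{i}}(k)$. Therefore $\End_{D}(\mathcal{V})\cong\prod_{i=1}^{r}\Mat_{d_{i}}(k)$.

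Finally, since $\mathcal{V}$ is a faithful $\mathcal{A}$-module, the structural homomorphism $\mathcal{A}\to\End_{k}(\mathcal{V})$ is injective, so density identifies $\mathcal{A}$ with $\End_{D}(\mathcal{V})\cong\prod_{i=1}^{r}\Mat_{d_{i}}(k)$, which is the required decomposition. There is no serious obstacle here: the statement is a textbook consequence of density. The only step demanding a little care is the commutant computation, where one must keep separate the multiplicity $n_{i}$ of $M_{i}$ in $\mathcal{V}$ from the dimension $d_{i}=\dim_{k}M_{i}$, and use absolute simplicity precisely to reduce $\End_{\mathcal{A}}(M_{i})$ from a general division algebra over $k$ to $k$ itself—this is what guarantees that the resulting matrix algebras are defined over $k$ rather than over a larger noncommutative division ring.
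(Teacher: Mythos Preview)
Your proposal is correct and follows essentially the same approach as the paper: compute the commutant $D=\End_{\mathcal{A}}(\mathcal{V})$ as a product of matrix algebras over $k$ using absolute simplicity, then invoke the Jacobson density theorem to identify $\mathcal{A}$ with the double commutant $\End_{D}(\mathcal{V})$. The only cosmetic difference is that you compute $\End_{D}(\mathcal{V})$ directly via the tensor factorization $\mathcal{V}_i\cong M_i\otimes_k k^{n_i}$, whereas the paper argues more symmetrically by observing that $D$ is itself semisimple with all simple modules absolutely simple and then applying the same structural result a second time.
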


\begin{proof}[Proof of Theorem \ref{elementsplit}]
Induction by $\dim_k(\mathcal{A})$. 

{\bf Step 0}. If $\dim_k(\mathcal{A})=1$ then $\mathcal{A}=k\cdot 1_{\mathcal{A}}\cong k$ is obviously splittable over $k$.

Now assume that $d:=\dim_k(\mathcal{A})>1$ and the assertion of Theorem \ref{elementsplit} holds true
for all semisimple algebras of dimension $<d$ over any field of characteristic $0$.

{\bf Step 1}. Suppose that the $k$-algebra $\mathcal{A}$ is not simple, i.e., it splits into a direct sum
$$\mathcal{A}=\mathcal{A}_1\oplus \mathcal{A}_2$$
 of two nonzero semisimple $k$-algebras $\mathcal{A}_1$ and $\mathcal{A}_2$.  Clearly, both $\dim_k(\mathcal{A}_1)$
 and $\dim_k(\mathcal{A}_2)$ are strictly less than $d$. There are elements 
 $$f_1\in \mathcal{A}_1, \  f_2\in \mathcal{A}_2$$
 such that $f=f_1+f_2$. Applying the induction assumption to both $(\mathcal{A}_1,f_1)$ and $(\mathcal{A}_2,f_2)$,
 we obtain that there are splittable over $k$ subalgebras 
$$\mathcal{C}_1\subset \mathcal{A}_1, \ \mathcal{C}_2\subset \mathcal{A}_2$$
such that
$$1_{\mathcal{A}_1}, f_1\in \mathcal{C}_1, \ 1_{\mathcal{A}_2}, f_2\in \mathcal{C}_2.$$
Now the direct sum 
$$\mathcal{C}=\mathcal{C}_1\oplus \mathcal{C}_2 \subset \mathcal{A}_1\oplus \mathcal{A}_2=\mathcal{A}$$
is splittable over $k$ and contains
$1_{\mathcal{A}_1}+ 1_{\mathcal{A}_2}=1_{\mathcal{A}}$ and
$f_1+f_2=f$. 

So, in the course of the proof, we may and will assume that  $\mathcal{A}$ is a simple $k$-algebra.

{\bf Step 2}. Let $E$ be the center of the simple $k$-algebra $\mathcal{A}$. Then $E$ is a field
that is a finite algebraic extension of $k$. Clearly, 
$\mathcal{A}$ carries the natural structure of central simple $E$-algebra. If $E \ne k$ then
$$\dim_E(\mathcal{A})<\dim_k(\mathcal{A})$$
and the induction assumption implies that there is a splittable over $E$ subalgebra 
$\mathcal{C}\subset \mathcal{A}$ that contains both  $1_{\mathcal{A}}$ and $f$.
Since $E/k$ is finite algebraic, $\mathcal{C}$ is splittable over $k$ as well.

So, in the course of the proof, we may and will assume that $E=k$, i.e.,
$\mathcal{A}$ is a central simple $k$-algebra.

{\bf Step 3}.
So, $\mathcal{A}$ is a {\sl central simple} $k$-algebra of finite $k$-dimension $d>1$. Recall that $d=m^2$  where $m>1$ is an integer.
 Then $\mathcal{A}$ carries the natural structure
of a $m^2$-dimensional 
$k$-Lie algebra with brackets
$$[u,v]:=uv-vu \ \forall u,v \in \mathcal{A}.$$
The center of the $k$-Lie algebra $\mathcal{A}$ coincides with $k\cdot 1_{\mathcal{A}}$.
Let us consider the $k$-linear {\sl reduced trace map} (see \cite{Reiner})
$$\tr_{\mathcal{A}}: \mathcal{A} \to k.$$
Recall \cite{Reiner} that
$$\tr_{\mathcal{A}}(uv)=\tr_{\mathcal{A}}(vu) \ \forall u,v\in \mathcal{A}; \tr_{\mathcal{A}}(\alpha)=m\alpha \ \forall \alpha \in k=k\cdot 1_{\mathcal{A}}.$$
This implies that 
$$\tr_{\mathcal{A}}([u,v])=\tr_{\mathcal{A}}(uv)-\tr_{\mathcal{A}}(vu)=0,$$
and therefore one may view $\tr_{\mathcal{A}}$ as a homomorphism of $k$-Lie algebras (here $k$ is viewed as the Lie algebra with zero brackets operation).
It follows that
the $k$-Lie algebra $\mathcal{A}$ splits into a direct sum
$$\mathcal{A}=k\cdot 1_{\mathcal{A}} \oplus \sL(\mathcal{A})$$
of its center $k\cdot 1_{\mathcal{A}}$ and the nonzero $(m^2-1)$-dimensional
$k$-Lie algebra 
$\sL(\mathcal{A}):=\ker(\tr_{\mathcal{A}})$; in addition,  $\sL(\mathcal{A})$ contains the derived $k$-Lie subalgebra 
$[\mathcal{A},\mathcal{A}]$ of $\mathcal{A}$.
It is known that $[\mathcal{A},\mathcal{A}]$ is an absolutely simple $k$-Lie algebra of type ${\sf A}_{m-1}$ over $k$,
see \cite[Ch. X, Sect. 3]{Jacobson}. This implies that
$$\dim_{k}([\mathcal{A},\mathcal{A}])=m^2-1= \dim_k(\sL(\mathcal{A})).$$
Since $[\mathcal{A},\mathcal{A}]\subset \sL(\mathcal{A})$, we have
$[\mathcal{A},\mathcal{A}]= \sL(\mathcal{A}),$ 
and therefore
$$ \mathcal{A}=k\cdot 1_{\mathcal{A}} \oplus \sL(\mathcal{A})=k\cdot 1_{\mathcal{A}} \oplus [\mathcal{A},\mathcal{A}]$$
is a reductive $k$-Lie algebra, whose  center is $k\cdot 1_{\mathcal{A}}$ and the semisimple part  is $\sL(\mathcal{A})$.

Since $\fchar(k)=0$,   
$$\tr_{\mathcal{A}}(1)=m \ne 0 \ \text{ in } k.$$
Replacing $f$ by $f -\frac{\tr_{\mathcal{A}}(f)}{m}1_{\mathcal{A}}$, we may and will assume that
$f$ lies in (semi)simple $\sL(\mathcal{A}).$

{\bf Step 4}.
Since $f$ is an element of the semisimple $k$-Lie algebra $\sL(\mathcal{A})$, it can be presented as a sum
$$f=f_{s}+f_n$$
of commuting semisimple $f_{s}$ and nilpotent $f_n$ \cite[Sect. 6, n 3, Th. 3]{BourbakiI}. Let us consider the natural faithful representation
of $\sL(\mathcal{A})$ in the finite-dimensional $k$-vector space $\mathcal{A}$
$$\sL(\mathcal{A}) \to \End_k(\mathcal{A}),  u \mapsto \{z \mapsto uz \ \forall z \in \mathcal{A}\} \ \forall u \in \sL(\mathcal{A}).$$
Then $f_{s}  \in \sL(\mathcal{A})$ acts on $\mathcal{A}$ as a semisimple operator with zero trace and $f_{n} \in \sL(\mathcal{A})$ acts  on $\mathcal{A}$ as a nilpotent operator.

{\bf Step 5}. Suppose that $f_{s} \ne 0$. Then the subalgebra $k[f_s]$ of $\mathcal{A}$ generated by $f_s$ and $1_{\mathcal{A}}$
is a semisimple commutative $k$-subalgebra that contains $k\cdot 1_{\mathcal{A}}$ as a proper $k$-subalgebra. The centralizer
$\mathcal{Z}_{f_{s}}$ of semisimple  $k[f_s]$ in semisimple $\mathcal{A}$  is a {\sl semisimple} $k$-subalgebra (\cite[Th. 4.3.2]{Herstein}, \cite[Th. 4.1]{Zarhin}) that contains 
both
$f_s$ and $f_n$ and therefore
contains $f_s+f_n=f$. Clearly, $\mathcal{Z}_{f_{s}}$ does {\sl not} coincide with the whole $\mathcal{A}$, since $f_s$ does not belong to the center 
 $k\cdot 1_{\mathcal{A}}$. This implies that 
 $$\dim_k(\mathcal{Z}_{f_{s}})<\dim_k(\mathcal{A})=d.$$
 The induction assumption applied to $\mathcal{Z}_{f_{s}}$ proves the desired assertion.
 So, we may and will assume that $f_s=0$, i.e., $f=f_n$ is  a nonzero {\sl nilpotent} element.

 {\bf Step 6}. So, $f$ is a nonzero nilpotent element.
By Jacobson-Morozov theorem \cite[Ch. 8, Sect. 11, Prop. 2]{Bourbaki},  there is a Lie $k$-subalgebra $\mathfrak{g}$ of $\sL(\mathcal{A})$
that contains $f$ and is isomorphic to $\sL(2,k)$. Let $\mathcal{C}\subset \mathcal{A}$
be the associative $k$-subalgebra of $\mathcal{A}$ generated by $\mathfrak{g}$ and $1_{\mathcal{A}}$.
Clearly,
$$f \in \mathfrak{g}\subset \mathcal{C}\subset \mathcal{A}.$$
Let us consider the natural faithful action of $\mathcal{C}$ on the $k$-vector space
$\mathcal{V}:=\mathcal{A}$ induced by multiplication in $\mathcal{A}$. Clearly, a $k$-vector subspace $\mathcal{W}$ of $\mathcal{V}$
is a $\mathcal{C}$-submodule (resp. a simple $\mathcal{C}$-submodule) if and only it is a 
$\mathfrak{g}$-submodule (resp. a simple $\mathfrak{g}$-submodule). In addition, if $\mathcal{W}$ is a $\mathcal{C}$-submodule
then the centralizer
$$\End_{\mathcal{C}}(\mathcal{W})=\End_{\mathfrak{g}}(\mathcal{W}).$$
Recall \cite[Ch. VII, Sect. 1, Prop. 3 and Th. 1]{Bourbaki} that every  $\sL(2,k)$-module of finite $k$-dimension is semisimple; in addition,
if $\mathcal{W}$ is a simple $\sL(2,k)$-module of finite $k$-dimension then it is absolutely simple, ibid.  This implies that
$\mathcal{V}$ is a semisimple $\mathcal{C}$-module and all of its  simple submodules are absolutely simple.
It follows from Lemma \ref{dense} that $\mathcal{C}$ is a direct sum of matrix algebras over $k$ and therefore
is splittable. This ends the proof of Theorem  \ref{elementsplit}.
\end{proof}

\begin{proof}[Proof of Lemma \ref{dense}]
We may and will identify $\mathcal{A}$ with its isomorphic image in $\End_k(\mathcal{V})$.
The semisimplicity and faithfullness of $\mathcal{V}$ combined with  \cite[Ch. XVII, Sect. 4, Prop. 4.7]{Lang} 
imply that $\mathcal{A}$  is a finite-dimensional semisimple $k$-algebra.  Let us consider the centralizer
  $$\mathcal{B}:=\End_{\mathcal{A}}(\mathcal{V})\subset \End_k(\mathcal{V})$$
  of $\mathcal{A}$ in $\End_k(\mathcal{V})$.  The semisimplicity of $\mathcal{V}$ and absolute simplicity of all  its simple $\mathcal{A}$-submodules
  combined with  \cite[Ch. XVII, Sect. 1, Prop. 1.2]{Lang} imply that there are a positive integer $s$ and $s$ positive integers
  $d_1, \dots, d_s$ such that the $k$-algebra
  $$\mathcal{B}\cong \oplus_{i=1}^s \Mat_{d_i}(k).$$
  In particular, $\mathcal{B}$ is semisimple and therefore the faithful $\mathcal{B}$-module $\mathcal{V}$ is semisimple.
  Let
  $$p_i: \oplus_{i=1}^s \Mat_{d_i}(k)\twoheadrightarrow \Mat_{d_i}(k)$$
  be the (surjective) projection map to the $i$th summand. Recall that the coordinate $k$-vector space $k^{d_i}$ endowed with the natural
  action of $\Mat_{d_i}(k)$ is the only (up to an isomorphism) simple $\Mat_{d_i}(k)$-module and this module is absolutely simple.
  Using $p_i$, one may endow $k^{d_i}$  with the natural structure of $\oplus_{i=1}^s \Mat_{d_i}(k)$-module and this module, which we denote
  by $M_i$, is an absolutely simple $\oplus_{i=1}^s \Mat_{d_i}(k)$-module. Clearly, every simple $\oplus_{i=1}^s \Mat_{d_i}(k)$-module is
  isomorphic to one of $M_i$; in particular, each simple $\oplus_{i=1}^s \Mat_{d_i}(k)$-module is absolutely simple. This means that each 
  simple $\mathcal{B}$-module is absolutely simple. 
  Recall that  the  $\mathcal{B}$-module $\mathcal{V}$ is semisimple and therefore is isomorphic to a direct sum of simple 
   $\mathcal{B}$-modules, each of which is absolutely simple. It follows from \cite[Ch. XVII, Sect. 1, Prop. 1.2]{Lang} that the centralizer $\End_{\mathcal{B}}(\mathcal{V})$
   of $\mathcal{B}$ in $\End_k(\mathcal{V})$ is isomorphic to a direct sum of matrix algebras over $k$.
     By Jacobson's density theorem \cite[Ch. XVII, Sect. 3, Th. 3.2]{Lang}, our $\mathcal{A}$ coincides with $\End_{\mathcal{B}}(\mathcal{V})$
     and therefore is isomorphic to a direct sum of matrix algebras over $k$. This ends the proof.

\end{proof}

\end{document}